\documentclass[12pt,a4paper,reqno]{amsart}\usepackage{amsfonts,amsthm,latexsym,amsmath,amssymb,amscd,amsmath, epsf}
\providecommand{\keywords}[1]{\textbf{\textit{Keywords:}} #1}

\usepackage[utf8]{inputenc}
\usepackage{latexsym,amssymb,amsthm}
\usepackage{url,graphics} 
\usepackage[dvips]{graphicx,epsfig} 
\usepackage{relsize}
\usepackage{multicol}
\usepackage{color}
\usepackage{mathtools}
\usepackage{tikz}


\newcommand{\Span}{\textrm{span}}

\newcommand {\bR} {\mathbb {R}}
\newcommand {\bP} {\mathbb {P}}

\newcommand{\C} {\mathcal{C}}
 
\newcommand{\K} {\mathcal{K}} 
\newcommand{\F} {{\mathcal{F}}}

\newcommand{\Hilb} {{\mathcal{H}}}
\newcommand{\I} {\mathcal{I}}

\newcommand {\Ex} {{\mathcal{E}x}} 
\newcommand {\Cen} {{C}} 
\newcommand {\In} {{\mathcal{I}n}} 

\newtheorem{theorem}{Theorem}
\newtheorem{proposition}[theorem]{Proposition}
 \newtheorem{definition}{Definition}

\newtheorem{corollary}{Corollary} \newtheorem{example}{Example}

\newtheorem{remark}{Remark} 

\newtheorem{question}{Question}
\newtheorem{conjecture}[question]{Conjecture}

\usepackage{lipsum}
\makeatletter
\DeclareRobustCommand{\cev}[1]{%
  \mathpalette\do@cev{#1}%
}
\newcommand{\do@cev}[2]{%
  \fix@cev{#1}{+}%
  \reflectbox{$\m@th#1\vec{\reflectbox{$\fix@cev{#1}{-}\m@th#1#2\fix@cev{#1}{+}$}}$}%
  \fix@cev{#1}{-}%
}
\newcommand{\fix@cev}[2]{%
  \ifx#1\displaystyle
    \mkern#23mu
  \else
    \ifx#1\textstyle
      \mkern#23mu
    \else
      \ifx#1\scriptstyle
        \mkern#22mu
      \else
        \mkern#22mu
      \fi
    \fi
  \fi
}

\begin{document}
          \numberwithin{equation}{section}

          \title[Classification of external Zonotopal algebras]{Classification of external Zonotopal algebras}

\author[G.~Nenashev]{Gleb Nenashev}
\address{ Department of Mathematics,
   Stockholm University,
   S-10691, Stockholm, Sweden}
\email{nenashev@math.su.se}

\begin{abstract}
In this paper we work with power algebras associated to hyperplane arrangements. There are three main types of these algebras, namely, external, central, and internal zonotopal algebras. We classify all external algebras up to isomorphism in terms of zonotopes. Also, we prove that unimodular external zonotopal algebras are in one to one correspondence with regular matroids. For the case of central algebras we formulate a conjecture.
\end{abstract}

\keywords{Commutative algebra, Power ideals, Zonotopes, Matroids, Lattice points}

\maketitle

\section{Introduction}

In this paper we work with power algebras, which are quotients of polynomial rings by power ideals. We consider zonotopal ideals, which are associated to zonotopes.  These ideals were independently introduced in two different ways. There are three types of algebras. We will work with the definition from F.\,Ardila and A.\,Postnikov~\cite{AP}; it comes from algebras generated by the curvature forms of tautological Hermitian linear bundles~\cite{Ar,SS}, see also papers~\cite{Berget, BergetIn, Huang, KN1, KN2, NS, Ne, PS, PSS, Shan}, where people work with quotients algebras by these ideals. At the same time the definition and the name was established by O.\,Holtz and A.\,Ron~\cite{HR}; it comes from Box-Splines and from Dahmen-Micchelli space~\cite{AS, DM, CP}, see also the papers~\cite{BR,DR, HRX,LenzMat, LenzBS, LenzH, NR, SX}. 

\smallskip

Let $A\in \bR^{n\times m}$ be a matrix of rank $n$.
Denote by $y_1,\ldots,y_m\in \bR^n$ the columns and by $t_1,\ldots,t_n\in \bR^m$ the rows.
For a matrix $A$, we define the zonotope $$Z_A:=\bigoplus_{i\in [m]} [0,y_i]\subset \bR^n$$ as the Minkovskii sum of intervals $[0,y_i],\ i\in[m]$.
By $\F(A)$ we denote the set of facets of  $Z_A$.
For any facet $H\in \F(A),$ we define $m(H)$ as the number of non-zero coordinates of the vector $\eta_H A\in \bR^m,$ where $\eta_H\in \bR^n$ is a normal for $H$. 

Let $\C_A^{(k)}$ be the quotient algebra $$\C_A^{(k)}:=\bR[x_1,\ldots,x_n]/{\I^{(k)}_A},$$
where $\I^{(k)}_A$ is the {\it zonotopal ideal} generated by the polynomials
$$p_H^{(k)}=(\eta_h\cdot (x_1,\ldots,x_n))^{m(H)+k},\ \ H\in \F(A).$$

\smallskip

There are $3$ main cases, where $k=\pm 1$ and $0$; they were considered in~\cite{AP,HR}.
\begin{itemize}
\item $k=1:\ \ $ $\C_A^{\Ex}=\C_A^{(1)}$ is the {\it external zonotopal algebra} for $A$;
\item $k=0:\ \ $ $\C_A^{\Cen}=\C_A^{(0)}$ is the {\it central zonotopal algebra} for $A$;
\item $k=-1:\ \ $ $\C_A^{\In}=\C_A^{(-1)}$ is the {\it internal zonotopal algebra} for $A$.
\end{itemize}
\begin{remark}
The case $k>1$ is not ``zonotopal", because the ideal $\hat{\I}^{(k)}$ generated by
$$p_{h}=(h\cdot (x_1,\ldots,x_n))^{m(h)+k},\ \ h\in \bR^n$$
is different from $\I^{(k)}$. They coincide only for the case where $k\leq 1$.

In the case $k\leq -5$, Hilbert series is not a specialization of the corresponding Tutte polynomial, see~\cite{AP}.
\end{remark}

\begin{theorem}[cf. \cite{AP, Berget, HR, LenzH}, External~\cite{PSS}, Central for graphs~\cite{PS}]
\label{thm:hilb} For a matrix $A\in \bR^{n\times m},$ the Hilbert series of zonotopal algebras are given by
\begin{itemize}
\item $\Hilb(\C_A^{\Ex})=q^{m-n}T_A(1+q,\frac{1}{q})$;
\item $\Hilb(\C_A^{\Cen})=q^{m-n}T_A(1,\frac{1}{q})$;
\item $\Hilb(\C_A^{\In})=q^{m-n}T_A(0,\frac{1}{q})$,
\end{itemize}
where $T_A$ is the Tutte polynomial of the vector configuration of the columns of $A$ (i.e., vectors $y_1,\ldots,y_m$).
\end{theorem}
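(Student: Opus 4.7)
The plan is to verify all three Hilbert series formulas in a unified way, by induction on $m$, using a short exact sequence of graded algebras that realizes the Tutte deletion-contraction recursion $T_A = T_{A\setminus y} + T_{A/y}$.

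\emph{Base case.} When $m = n$ and the columns of $A$ form a basis of $\bR^n$, the zonotope $Z_A$ is a parallelepiped: each facet $H$ is spanned by $n-1$ of the columns, so $\eta_H$ pairs nontrivially with a single column and $m(H) = 1$. Choosing coordinates dual to $y_1,\ldots,y_n$, the ideal becomes $\I_A^{(k)} = (x_1^{1+k},\ldots,x_n^{1+k})$ and $\C_A^{(k)} \cong \bigotimes_{i=1}^{n}\bR[x_i]/(x_i^{1+k})$. The Hilbert series are $(1+q)^n$, $1$, $0$ for $k = 1, 0, -1$, matching $T_{U_{n,n}}(x,y) = x^n$ evaluated at $(1+q,1/q)$, $(1,1/q)$, $(0,1/q)$.

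\emph{Inductive step.} Pick a column $y = y_m$ that is not a coloop, and choose coordinates so that $y = e_n$. The key construction is the short exact sequence
\begin{equation*}
0 \longrightarrow \C_{A\setminus y}^{(k)}(-1) \xrightarrow{\ \cdot\, x_n\ } \C_A^{(k)} \xrightarrow{\ x_n \mapsto 0\ } \C_{A/y}^{(k)} \longrightarrow 0,
\end{equation*}
where $A\setminus y \in \bR^{n\times(m-1)}$ is the deletion and $A/y \in \bR^{(n-1)\times(m-1)}$ the contraction (remaining columns projected modulo $\langle y\rangle$). Taking Hilbert series yields $\Hilb(\C_A^{(k)}) = q\,\Hilb(\C_{A\setminus y}^{(k)}) + \Hilb(\C_{A/y}^{(k)})$. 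Since $A\setminus y$ has $m-1$ columns of rank $n$ and $A/y$ has $m-1$ columns of rank $n-1$, the powers of $q$ in the induction hypothesis align and give $q^{m-n}(T_{A\setminus y} + T_{A/y}) = q^{m-n}T_A$ at each evaluation point. The coloop case is treated separately: if $y$ is a coloop then $Z_A = Z_{A\setminus y} \oplus [0,y]$ and $\C_A^{(k)} \cong \C_{A\setminus y}^{(k)} \otimes \bR[t]/(t^{1+k})$, matching $T_A = x\,T_{A\setminus y}$.

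\emph{Main obstacle.} The technical heart is the exact sequence itself, which rests on the ideal identities $(\I_A^{(k)} : x_n) = \I_{A\setminus y}^{(k)}$ and $\I_A^{(k)} + (x_n) = \I_{A/y}^{(k)} + (x_n)$. Both require a careful bookkeeping of facet data across the three zonotopes: facets $H$ of $Z_A$ with $\eta_H \perp y$ descend bijectively to facets of $Z_{A/y}$ with the same $m$-value, while the remaining facets, after removing the contribution of the deleted column, correspond to facets of $Z_{A\setminus y}$ with $m$-value smaller by exactly $1$. Translating this geometric correspondence into the two ideal equalities — and keeping track of how the $+k$ shift in the generator exponents $m(H)+k$ interacts with the unit degree shift in the exact sequence — is where the real work lies.
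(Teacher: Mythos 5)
First, a remark on the comparison you asked for: the paper states Theorem~\ref{thm:hilb} only as a quoted result (with references) and contains no proof of it, so your argument can only be judged on its own merits.

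Your skeleton --- the base case, the coloop case, and the numerical recursion $\Hilb(\C_A^{(k)})=q\,\Hilb(\C_{A\setminus y}^{(k)})+\Hilb(\C_{A/y}^{(k)})$ --- is the standard one and is correct. The gap is that the short exact sequence you rest it on does not exist: both ideal identities that you relegate to the ``main obstacle'' are false. For the colon ideal, take $A=[e_1,e_2,e_1+e_2]$ (the graph $K_3$), $k=1$, $y=e_2$, so that $\I_A^{(1)}=(x_1^3,\,x_2^3,\,(x_1-x_2)^3)$ and $\I_{A\setminus y}^{(1)}=(x_2^2,\,(x_1-x_2)^2)$. In $\C_A^{(1)}$ the degree-three relations reduce to $x_1^2x_2=x_1x_2^2\neq 0$, hence
$$x_2(x_1-x_2)^2=x_1^2x_2-2x_1x_2^2+x_2^3=-x_1x_2^2\neq 0,$$
so $(x_1-x_2)^2\in\I_{A\setminus y}^{(1)}\setminus(\I_A^{(1)}:x_2)$ and multiplication by $x_2$ is not even a well-defined map $\C_{A\setminus y}^{(1)}\to\C_A^{(1)}$. (Here $(\I_A^{(1)}:x_2)$ and $\I_{A\setminus y}^{(1)}$ have quotients with the same Hilbert function, but they are genuinely different ideals.) The second identity fails as soon as parallel columns appear: for $A=[e_1+e_2,\,e_1+e_2,\,e_1+e_2,\,e_1,\,e_2]$ and $y=e_2$, the generator $(x_1-x_2)^{2+k}$ reduces modulo $x_2$ to $x_1^{2+k}$, while $\I_{A/y}^{(k)}=(x_1^{4+k})$; so $\C_A^{(k)}/(x_2)$ is a proper quotient of $\C_{A/y}^{(k)}$ (dimension $3$ versus $5$ when $k=1$).

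So the issue is not bookkeeping of facet data: the sequence $0\to\C_{A\setminus y}^{(k)}(-1)\to\C_A^{(k)}\to\C_{A/y}^{(k)}\to 0$ is simply not realized by multiplication by $x_n$ and reduction modulo $x_n$, even though the Hilbert series identity it would imply is true. What one can hope to show is that the kernel of $\C_A^{(k)}\to\C_A^{(k)}/(x_n)$ has the Hilbert series of $\C_{A\setminus y}^{(k)}(-1)$ --- but that is essentially the statement being proved, so your induction does not close. The arguments in the cited literature circumvent exactly this point: they work on the dual side with the $\mathcal{P}$-spaces (the inverse systems of the power ideals), where a deletion--contraction direct-sum decomposition can be established, or they exhibit explicit bases indexed by external activity (for the external case, the forest basis in the $\Phi_m$ presentation of Theorem~\ref{thm:definitions}). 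To salvage a quotient-algebra induction you would have to replace the colon-ideal claim by an argument of that kind.
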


There are other definitions of external algebras from~\cite{PSS}.
Let $\Phi_m$ be the square-free commutative algebra generated by $\phi_i,\ i\in[m]$, i.e., with relations
$$\phi_i \phi_j=\phi_j \phi_i,\ i,j\in[m]\ \ \ \textrm{and}\ \ \  \phi_i^2=0,\ i\in[m].$$

\begin{theorem}[cf.~\cite{PSS}]
\label{thm:definitions}
The external algebra $\C_A^{\Ex}$  is isomorphic to the subalgebra of $\Phi_A^{\Ex}:=\Phi_m$  generated by
$$X_i:=t_i\cdot (\phi_1,\ldots,\phi_m),\ i \in[n].$$
\end{theorem}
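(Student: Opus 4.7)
The plan is to construct a surjective graded algebra homomorphism from $\C_A^{\Ex}$ onto $\langle X_1,\ldots,X_n\rangle\subseteq \Phi_m$, and then upgrade it to an isomorphism by a Hilbert series comparison using Theorem~\ref{thm:hilb}.

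First, define $\psi\colon \bR[x_1,\ldots,x_n]\to \Phi_m$ by $\psi(x_i)=X_i$; its image is by definition the subalgebra in question. Check that $\psi$ kills each facet generator of $\I_A^{(1)}$: for $H\in \F(A)$,
$$
\psi(p_H^{(1)})=\bigl(\eta_H\cdot(X_1,\ldots,X_n)\bigr)^{m(H)+1}=\Bigl(\sum_{j=1}^m(\eta_H A)_j\,\phi_j\Bigr)^{m(H)+1}.
$$
By definition of $m(H)$, exactly $m(H)$ of the coefficients $(\eta_H A)_j$ are nonzero, so the inner linear form involves only $m(H)$ of the $\phi_j$. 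Expanding the $(m(H)+1)$-st power by the multinomial theorem, every monomial has total degree $m(H)+1$ in those $m(H)$ variables, so by pigeonhole some $\phi_j^2$ appears as a factor and every term vanishes in $\Phi_m$. Hence $\I_A^{(1)}\subseteq \ker\psi$, and $\psi$ descends to a surjective graded homomorphism $\bar\psi\colon \C_A^{\Ex}\twoheadrightarrow \langle X_1,\ldots,X_n\rangle$.

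To promote $\bar\psi$ to an isomorphism it suffices, by gradedness, to check equality of Hilbert series. By Theorem~\ref{thm:hilb}, $\Hilb(\C_A^{\Ex})=q^{m-n}T_A(1+q,1/q)$, so the task reduces to showing the subalgebra has the same Hilbert series. A natural route is induction on $m$ via the recursion $T_A=T_{A\setminus j}+T_{A/j}$ for a non-loop, non-coloop column $j$: relate the subalgebra for $A$ to those for $A\setminus j$ and $A/j$, matching the Tutte recursion term by term, with loops and coloops as base cases. Alternatively, one exhibits and proves linear independence of an explicit basis indexed by a matroidal family (broken-circuit-type subsets, parking functions, etc.) of the expected cardinality in each degree.

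The well-definedness of $\bar\psi$ is a straightforward pigeonhole; the main obstacle is the Hilbert series computation for $\langle X_1,\ldots,X_n\rangle\subseteq \Phi_m$, which essentially reproves Theorem~\ref{thm:hilb} from the subalgebra side. Deletion and contraction act quite differently on $\Phi_m$ and on its subalgebra generated by linear forms, so the inductive bookkeeping requires care.
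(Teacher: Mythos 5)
The paper itself offers no proof of Theorem~\ref{thm:definitions}; it is quoted from~\cite{PSS} as a known result, so there is no internal argument to compare against. Judged on its own terms, your proposal is half a proof. The first half is correct and complete: $\psi(x_i)=X_i$ surjects onto the subalgebra by construction, and the pigeonhole computation showing $\psi(p_H^{(1)})=\bigl(\sum_j(\eta_H A)_j\phi_j\bigr)^{m(H)+1}=0$ in $\Phi_m$ is exactly right, since a product of $m(H)+1$ variables drawn from a set of $m(H)$ of them must repeat one and hence contain a square. This gives the surjection $\bar\psi\colon \C_A^{\Ex}\twoheadrightarrow\langle X_1,\ldots,X_n\rangle$.

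The second half --- injectivity --- is where the entire content of the theorem lives, and you do not prove it; you name two possible strategies (deletion--contraction on the subalgebra, or an explicit basis) and correctly flag that this is ``the main obstacle.'' That is a genuine gap, not a routine verification. Note in particular that you cannot simply invoke Theorem~\ref{thm:hilb} on both sides: that theorem, as stated, computes $\Hilb(\C_A^{\Ex})$ for the quotient by the power ideal, and it is precisely the equality of that series with the Hilbert series of the subalgebra of $\Phi_m$ that needs proving (historically the two were computed in different papers, and identifying them is the substance of the statement). If you pursue the deletion--contraction route, the delicate point is identifying $\ker\bigl(\langle X_i\rangle_{A}\to\langle X_i\rangle_{A\setminus j}\bigr)$ with a degree-shifted copy of the subalgebra for $A/j$; the surjectivity of that identification is again easy and its injectivity is again the hard direction, so the induction must be set up to prove a dimension lower bound (e.g.\ by exhibiting independent elements or by a dual/apolarity argument) rather than just matching upper bounds, which the surjection already provides for free. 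As written, the proposal establishes only $\Dim\langle X_1,\ldots,X_n\rangle\le\Dim\C_A^{\Ex}$ together with a plan for the reverse inequality.
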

In papers~\cite{KN1,KN2} we constructed the analogue of Theorem~\ref{thm:definitions} in the case of central and internal zonotopal algebras for totally unimodular matrices, see the definition below. 

\medskip

The main interesting examples of zonotopal algebras arise for totally unimodular matrices and for graphs.
The matrix $A$ is {\it totally unimodular} if  any its minors is equal to $\pm 1$ or $0$. In this case the total dimensions of the algebras have a nice interpretation.
\begin{theorem}[cf.~\cite{HR}]\label{thm:dim:regular} Let $A\in \bR^{n\times m}$ be a totally unimodular matrix of rank $n$. Then the total dimension
\begin{itemize}
\item $dim(\C^{\Ex}_A)$ is equal to the number of lattice points of $Z_A$;
\item $dim(\C^{\Cen}_A)$ is equal to the volume of $Z_A$;
\item $dim(\C^{\In}_A)$ is equal to the number of interior lattice points of $Z_A$.
\end{itemize}
\end{theorem}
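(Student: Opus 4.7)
The plan is to reduce the statement to a pair of well-known inputs: Theorem~\ref{thm:hilb} on the Hilbert series and Stanley's classical enumeration of lattice points in a zonotope. First, recall that for any finite-dimensional graded algebra the total dimension is obtained by specializing the Hilbert series at $q=1$. Applying this to Theorem~\ref{thm:hilb} gives
\begin{align*}
\dim(\C^{\Ex}_A) &= T_A(2,1), \\
\dim(\C^{\Cen}_A) &= T_A(1,1), \\
\dim(\C^{\In}_A) &= T_A(0,1).
\end{align*}
So the task reduces to identifying each of these three Tutte evaluations with the claimed lattice-point statistic of $Z_A$ in the totally unimodular case.

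For the identification itself the key ingredient is Stanley's half-open decomposition of a zonotope: $Z_A$ is a disjoint union, over linearly independent subsets $I$ of the columns $y_1,\dots,y_m$, of translated half-open parallelepipeds $P_I$ of dimension $|I|$. When $A$ is totally unimodular the image of $[0,1)^{|I|}$ under the map sending the standard basis to the columns indexed by $I$ is an integral simplex of relative volume $1$, so each $P_I$ has relative volume $1$ and contains exactly one lattice point of $\bR^n$. Summing volumes of the top-dimensional pieces gives $\textrm{vol}(Z_A) = \#\{\text{bases}\} = T_A(1,1)$; summing lattice points across all independent $I$ gives $\#(Z_A\cap \bZ^n) = \#\{\text{independent sets}\} = T_A(2,1)$. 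This takes care of the first two bullets.

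For the interior count I would use Ehrhart-Macdonald reciprocity applied to $Z_A$ (which is a lattice polytope since $A$ is totally unimodular), combined with the known fact that the Hilbert series of the internal algebra is the ``dual'' specialization of the Tutte polynomial at $y=1$. Concretely, once Stanley's decomposition has identified $\#(Z_A\cap\bZ^n)$ with $T_A(2,1)$, the reciprocal statement $(-1)^n L_{Z_A}(-1) = \#(\text{int}\,Z_A\cap \bZ^n)$ translates (via the dilation parameter appearing in the Tutte-polynomial evaluation) into $\#(\textrm{int}(Z_A)\cap \bZ^n) = T_A(0,1)$.

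The main obstacle is not the high-level architecture but the bookkeeping in the third bullet: one must verify that the particular ``base point'' of each half-open parallelepiped $P_I$ selected by Stanley's decomposition lies in the interior of $Z_A$ precisely for the independent sets counted (with sign) by $T_A(0,1)$. Since all three identifications are standard in the zonotopal literature and explicitly recorded in \cite{HR}, once the Hilbert-series specialization at $q=1$ has been written down the proof is essentially a citation.
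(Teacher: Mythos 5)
This theorem is stated in the paper with ``cf.~\cite{HR}'' and no proof is given there --- it is imported as a known result --- so there is no in-paper argument to compare against; what you have written is a reconstruction of the standard proof, and it is essentially correct. The specialization of Theorem~\ref{thm:hilb} at $q=1$ does give $T_A(2,1)$, $T_A(1,1)$, $T_A(0,1)$, and the half-open parallelepiped decomposition of $Z_A$ indexed by independent subsets of columns, together with total unimodularity (each top-dimensional piece has volume $|\det|=1$, and each piece contains exactly one point of $\bZ^n$ because a unimodular independent set spans the intersection of its linear span with $\bZ^n$), correctly identifies the lattice-point count with the number of independent sets and the volume with the number of bases. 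Two small criticisms. First, your final paragraph undercuts your own third bullet: if you use Ehrhart--Macdonald reciprocity on the Ehrhart polynomial $L_{Z_A}(t)=\sum_{I}t^{|I|}$ (valid since $Z_A$ is a full-dimensional lattice polytope here), then $(-1)^nL_{Z_A}(-1)=\sum_{I}(-1)^{n-|I|}=T_A(0,1)$ by the independent-set expansion $T_A(x,1)=\sum_I(x-1)^{n-|I|}$, and no bookkeeping about which base points of the half-open pieces lie in the interior of $Z_A$ is needed at all; that bookkeeping would only be required if you insisted on counting interior points directly from the decomposition instead of via reciprocity. You should commit to one route rather than gesturing at both. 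Second, a terminological slip: the image of $[0,1)^{|I|}$ is a half-open parallelepiped, not a simplex. With those repairs the argument is complete and matches what \cite{HR} (and Stanley's zonotope decomposition) actually establish.
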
 

The main examples of totally unimodular matrices are graphs. Namely, let $G$ be a graph on $n$ vertices; then the incidence matrix of any orientation of $G$ is totally unimodular. To construct the zonotopal algebra, we should forget exactly one row for each connected component of $G$. These algebras are independent (up to isomorphism) of the choice of orientations and rows.

These graphical algebras were considered in~\cite{Huang, KN1, NS, Ne, PSS}. In the graphical case Theorem~\ref{thm:dim:regular} can be written in graph theory terminology.
\begin{theorem}[cf.~\cite{PS}]\label{thm:dim:graph} Let $G$ be a graph. Then the total dimension
\begin{itemize}
\item $dim(\C^{\Ex}_G)$ is equal to the number of forests in $G$;
\item $dim(\C^{\Cen}_G)$ is equal to the number of trees in $G$ (in the connected case).
\end{itemize}
\end{theorem}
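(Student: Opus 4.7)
The plan is to deduce Theorem~\ref{thm:dim:graph} as a direct graph-theoretic translation of Theorem~\ref{thm:dim:regular} (equivalently, a specialization of Theorem~\ref{thm:hilb} at $q=1$), using the classical combinatorial interpretation of Tutte polynomial evaluations at the points $(2,1)$ and $(1,1)$.

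First I would fix the set-up: for a graph $G$ with $n$ vertices, $c$ connected components, and $m$ edges, choose any orientation and let $\tilde{B}\in\bR^{n\times m}$ be the signed vertex-edge incidence matrix. Since the columns of $\tilde{B}$ sum to zero on each component, deleting one row per component yields a matrix $A\in\bR^{(n-c)\times m}$ of full row-rank $n-c$. I would recall that $A$ is totally unimodular (a standard cycle-signing argument: every square submatrix either has a zero column, or can be permuted so each column contains both a $+1$ and a $-1$, in which case one checks inductively that the determinant lies in $\{0,\pm 1\}$). Moreover, the matroid of the columns of $A$ is exactly the graphic matroid $M(G)$; in particular, independent subsets of columns correspond to edge subsets that contain no cycle, i.e.\ forests in $G$, and bases correspond to maximal forests, i.e.\ spanning trees when $G$ is connected.

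Next I would invoke Theorem~\ref{thm:dim:regular} on this totally unimodular $A$: $\dim(\C^{\Ex}_G)$ equals the number of lattice points of the graphical zonotope $Z_A$, and $\dim(\C^{\Cen}_G)$ equals its volume. The remaining input is the classical identity (originally due to Stanley) that for any totally unimodular $A$,
\[
\#(Z_A\cap \bZ^{n-c})=T_A(2,1),\qquad \mathrm{vol}(Z_A)=T_A(1,1),
\]
together with the well-known Tutte evaluations $T_M(2,1)=\#\{\text{independent sets of }M\}$ and $T_M(1,1)=\#\{\text{bases of }M\}$. Applied to $M=M(G)$, these read as the number of forests and the number of spanning trees of $G$ respectively; this is exactly the content of the theorem. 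Alternatively, one may bypass Theorem~\ref{thm:dim:regular} and specialize Theorem~\ref{thm:hilb} at $q=1$, obtaining the same Tutte evaluations directly.

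There is essentially no genuine obstacle once one accepts total unimodularity of the reduced incidence matrix and the Tutte evaluations above; the only point requiring mild care is the connectedness hypothesis in the central case. For disconnected $G$, the bases of $M(G)$ are spanning forests of $G$ (one spanning tree per component), so the correct replacement of ``trees'' in the general central statement is ``spanning forests''. I would note this explicitly for completeness, since the theorem as stated restricts the central assertion to the connected case to avoid this distinction.
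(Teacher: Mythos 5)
Your proposal is correct and matches how the paper itself treats this statement: the paper gives no proof, citing \cite{PS} and noting (via \cite{Be,KW}) exactly the equivalence between lattice points/volume of the graphical zonotope and forests/trees that you derive from Theorem~\ref{thm:dim:regular}, or equivalently from Theorem~\ref{thm:hilb} specialized at $q=1$ using the Tutte evaluations $T(2,1)$ and $T(1,1)$. Your remark about replacing ``trees'' by ``spanning forests'' in the disconnected central case is a sensible clarification but not needed given the theorem's explicit restriction to the connected case.
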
 

It is well-known that the number of lattice points (volume) of the corresponding zonotope and the
number of forests (trees) of a graph are the same, see for example~\cite{Be, KW} (Points of the zonotope correspond to score vectors).
\begin{example}
Let $G$ be graph on the vertex set $\{0,1,2\}$ with $4$ edges $$(0,1), (0,2), (1,2),\ \textrm{and}\ (1,2),$$
see fig.~\ref{fig:zonotopeC3+Edge}, left. Let us orient all edges in the sense of increasing corresponding number. Consider the incidence matrix after forgetting of $0$-th row

\[
A:=\begin{bmatrix}
   -1       & 0 & 1 & 1 \\
   0       & -1 & -1 & -1 
\end{bmatrix},\]
see zonotope $Z_A$ on fig.~\ref{fig:zonotopeC3+Edge}, right. 

 {\begin{figure}[htb!]
\centering
\includegraphics[scale=0.75]{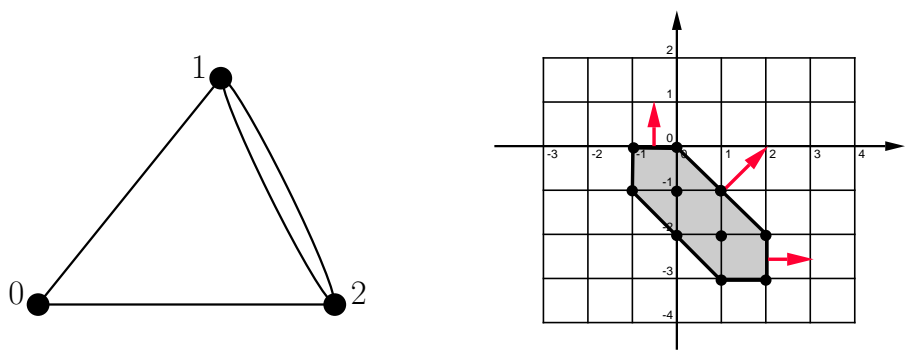}
\caption{A graph and its corresponding zonotope.}
\label{fig:zonotopeC3+Edge}
\end{figure}
}

The zonotope $Z_A$ has $6$ facets. We need the set of its normals (note that parallel facets have the same normal up to a factor). There are $3$ normals
\begin{itemize}
\item $\eta_1=(1,0);$
\item $\eta_2=(0,1);$
\item $\eta_3=(1,1).$
\end{itemize}

It is easy to check that $m(\eta_1)=m(\eta_2)=3$ and $m(\eta_3)=2$. Hence,
$$\I^{(k)}_A=\langle{x_1^{3+k},x_2^{3+k}, (x_1+x_2)^{2+k}}\rangle.$$
Then\begin{itemize}
\item $\Hilb(\C_A^{\Ex})= 1+2q+3q^2+3q^3+q^4$ \ \   and \ \  $dim(\C_A^{\Ex})=10$;  
\item $\Hilb(\C_A^{\Cen})= 1+2q+2q^2$ \ \   and  \ \  $dim(\C_A^{\Cen})=5$; 
\item $\Hilb(\C_A^{\In})= 1+q$ \ \   and \ \   $dim(\C_A^{\In})=2$. 
\end{itemize}
It is easy to check that $10,\ 5,\ \textrm{and}\ 2$ are exactly the number of lattice points, the area, and the number of interior lattice points of $Z_A$, respectively. Furthermore, $10$ and $5$ are the number of forests and trees in $G$.
In this case the Tutte polynomial is given by

$$T_G(x,y)=T_A(x,y)=x+y+x^2+xy+y^2,$$
Anyone can check the formulas for the Hilbert series.
\end{example}

\medskip

The following important property of external graphical algebras was proved in~\cite{Ne}.
\begin{theorem}[cf.~\cite{Ne}] Given two graphs $G_1$ and $G_2$. Then the following are equivalent:
\begin{itemize}
\item $\C_{G_1}^{\Ex}$ and $\C_{G_2}^{\Ex}$ are isomorphic as non-graded algebras;
\item $\C_{G_1}^{\Ex}$ and $\C_{G_2}^{\Ex}$ are isomorphic as graded algebras;
\item the graphical matroids $M_{G_1}$ and $M_{G_2}$ are isomorphic.
\end{itemize} 
\end{theorem}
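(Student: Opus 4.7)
The plan is to prove $(\mathrm{iii}) \Rightarrow (\mathrm{ii}) \Rightarrow (\mathrm{i}) \Rightarrow (\mathrm{iii})$, where the middle implication is tautological.

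For $(\mathrm{iii}) \Rightarrow (\mathrm{ii})$, I would use Theorem~\ref{thm:definitions}, which realizes $\C_G^{\Ex}$ as the subalgebra of $\Phi_m$ generated by the rows of the (reduced) incidence matrix. By Whitney's 2-isomorphism theorem, two graphs with isomorphic graphical matroids differ by a sequence of vertex identifications/splittings and Whitney twists. Each such move translates at the matrix level into a permutation of columns, a sign change of columns, or an invertible linear change of basis on the row span; all of these induce graded automorphisms of the ambient $\Phi_m$ (sign changes act trivially because $\phi_i^2=0$) that carry one row-subalgebra onto the other, giving a graded isomorphism.

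For $(\mathrm{i}) \Rightarrow (\mathrm{ii})$, I would observe that $\C^{\Ex}_G$ is a local Artinian $\bR$-algebra whose maximal ideal $\mathfrak{m}=(x_1,\ldots,x_n)$ coincides with both the Jacobson radical and the nilradical. Because $\C^{\Ex}_G$ is generated in degree one, $\mathfrak{m}^k = \bigoplus_{j\geq k}(\C^{\Ex}_G)_j$, and the associated graded $\textrm{gr}_\mathfrak{m}\C^{\Ex}_G$ is canonically isomorphic to the original graded algebra. Any (ungraded) $\bR$-algebra isomorphism $\C^{\Ex}_{G_1}\to \C^{\Ex}_{G_2}$ must send $\mathfrak{m}_1$ to $\mathfrak{m}_2$ and hence induce an isomorphism of associated graded algebras, i.e.\ a graded isomorphism of the original ones.

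For $(\mathrm{ii}) \Rightarrow (\mathrm{iii})$, a graded isomorphism $\Psi:\C^{\Ex}_{G_1}\to\C^{\Ex}_{G_2}$ restricts on the degree-one part to a linear isomorphism $\psi:\bR^{n_1}\to\bR^{n_2}$. The crucial invariant is the nilpotency index: for every nonzero $v\in(\C^{\Ex}_G)_1$ the smallest $N$ with $v^N=0$ equals $m_G(v)+1$, where $m_G(v)$ is the number of columns $y_i$ with $v\cdot y_i\neq 0$. This is the technical heart of the argument, proved by combining the explicit generators $(\eta_H\cdot x)^{m(H)+1}$ of $\I^{(1)}$ with an apolarity/duality argument as in~\cite{Ne}, to see that no $(v\cdot x)^k$ with $k\leq m_G(v)$ lies in the ideal. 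Since $\Psi$ preserves nilpotency indices, $\psi$ intertwines $m_{G_1}$ and $m_{G_2}$. But the function $v\mapsto m - m_G(v)$ records, for each hyperplane $v^\perp$, how many columns of the incidence matrix it contains; varying $v$ therefore recovers the full lattice of flats of $M_G$ purely from the algebra. Hence $\psi$ descends to a matroid isomorphism $M_{G_1}\cong M_{G_2}$.

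The main obstacle is the nilpotency-index formula; once that is in place, the matroid reconstruction is a straightforward lattice-of-flats argument and the upgrade from ungraded to graded is a standard associated-graded manoeuvre.
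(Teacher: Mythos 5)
Your proposal is essentially correct, and its core coincides with the paper's proof of the more general Theorem~\ref{thm:IsomZ} (note the paper does not reprove the graphical statement itself --- it is cited from~\cite{Ne} and subsumed by Theorems~\ref{thm:IsomZ} and~\ref{thm:IsomregZ}): realize $\C_G^{\Ex}$ inside $\Phi_m$ via Theorem~\ref{thm:definitions}, observe that the nilpotency index of a degree-one element $v$ counts the columns $y_k$ with $v\cdot y_k\neq 0$, and use this function to reconstruct the column configuration. You differ in organization, and each choice buys something. You split off $1\Rightarrow 2$ as a clean associated-graded argument ($\mathfrak{m}$ is the nilradical, $\mathfrak{m}^k=\bigoplus_{j\ge k}(\C^{\Ex}_G)_j$ since the algebra is standard graded, so $\textrm{gr}_{\mathfrak{m}}\C^{\Ex}_G\cong\C^{\Ex}_G$); the paper instead works directly in the ungraded algebra, introducing ``reducible'' elements and a basis of nilpotents maximizing length modulo reducibles --- the same manoeuvre carried out by hand. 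For $2\Rightarrow 3$ you stop at the matroid, which suffices here, whereas the paper recovers the full multiset of columns in $\bP^{n-1}$ (via its partial order on $\bP^{n-1}$ and differences of lengths of generic linear combinations), which is what $z$-equivalence of arbitrary real matrices requires. For $3\Rightarrow 2$ you invoke Whitney's $2$-isomorphism theorem, which is graph-specific; the paper's Theorem~\ref{thm:IsomregZ} instead uses unique orientability of regular matroids, which covers all unimodular matrices at once.

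Three small repairs. First, the ``technical heart'' you defer to apolarity is immediate from the square-free model you already use: for $v=\sum_k c_k\phi_k$ one has $v^{\ell}=\ell!\sum_{|S|=\ell}\bigl(\prod_{k\in S}c_k\bigr)\prod_{k\in S}\phi_k$, which is nonzero exactly when $\ell\le\#\{k:c_k\neq 0\}$; no duality argument is needed. Second, passing from ``$\psi$ intertwines $v\mapsto m_G(v)$'' to a matroid isomorphism needs one more line: from the numbers $m-m_G(v)$ of columns lying on each hyperplane $v^{\perp}$ one recovers the multiplicity of every point of $\bP^{n-1}$ in the column configuration (take a hyperplane through a candidate point that is otherwise generic), and it is this multiset, transported by $\psi$, that yields $M_{G_1}\cong M_{G_2}$; this is exactly the inversion the paper formalizes. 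Third, the parenthetical ``sign changes act trivially because $\phi_i^2=0$'' is inaccurate --- $\phi_i\mapsto-\phi_i$ is a nontrivial automorphism of $\Phi_m$ --- but also unnecessary, since rescaling columns by nonzero constants is already part of $z$-equivalence and manifestly induces an isomorphism of the row-generated subalgebras. (As in the paper, loops, i.e.\ zero columns, are invisible to the algebra, so the statement should be read for loopless graphs.)
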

The following conjecture was formulated for the central case:
\begin{conjecture}[cf.~\cite{Ne}] \label{conj:graphs} Given two connected graphs $G_1$ and $G_2$, the following are equivalent:
\begin{itemize}
\item $\C_{G_1}^{\Cen}$ and $\C_{G_2}^{\Cen}$ are isomorphic as non-graded algebras;
\item $\C_{G_1}^{\Cen}$ and $\C_{G_2}^{\Cen}$ are isomorphic as graded algebras;
\item the bridge-free matroids $M_{G_1}$ and $M_{G_2}$ are isomorphic.
\end{itemize} 
Here the bridge-free matroid of a graph is its graphical matroid after deleting all bridges.
\end{conjecture}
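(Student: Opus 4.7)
Since (graded iso) $\Rightarrow$ (non-graded iso) is immediate, the plan is to prove the cycle (non-graded iso) $\Rightarrow$ (graded iso) $\Rightarrow$ (bridge-free matroid iso) $\Rightarrow$ (graded iso). The first arrow is essentially formal: each $\C^{\Cen}_G$ is a connected graded Artinian local $\bR$-algebra, so its unique maximal ideal $\mathfrak{m}=\bigoplus_{k\geq 1}(\C^{\Cen}_G)_k$ is the nilradical and is automatically preserved by any ring isomorphism. The induced isomorphism of associated graded rings with respect to the $\mathfrak{m}$-adic filtration is then a graded isomorphism, and for a graded Artinian local algebra this associated graded is canonically isomorphic to the algebra itself.

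For (bridge-free matroid iso) $\Rightarrow$ (graded iso), I would proceed in two steps. First, establish $\C^{\Cen}_G\cong \C^{\Cen}_{G/e}$ whenever $e\in G$ is a bridge; the Hilbert series agree because a bridge contributes a factor of $x$ to $T_G$ which becomes $1$ at $x=1$, and one can lift this to an algebra isomorphism at the level of generators using the central analogue of Theorem~\ref{thm:definitions} from~\cite{KN1, KN2}. After iteration, the problem reduces to showing that for bridgeless graphs the central algebra depends only on the graphical matroid; this can be approached in the spirit of~\cite{Ne} by presenting $\C^{\Cen}_G$ via its subalgebra description in the corresponding $\Phi_m$-type algebra and verifying invariance under Whitney $2$-isomorphisms (vertex identifications, splittings, and twists).

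The hardest direction is (graded iso) $\Rightarrow$ (bridge-free matroid iso), where one must reconstruct the matroid purely from the algebra structure. A natural strategy is to study the projective \emph{hyperplane spectrum} of $\C^{\Cen}_G$: the set of pairs $([\ell],d)$ with $\ell\in (\C^{\Cen}_G)_1$ nonzero and $d$ the minimal exponent such that $\ell^d=0$, restricted to those $\ell$ for which $d$ is strictly smaller than the generic nilpotency order. In the external case the analogous spectrum recovers the normals $\eta_H$ with $d=m(H)+1$, and their combinatorics determines the graphical matroid. The main obstacle is that in the central case the exponents $m(H)$ can coincide with the generic nilpotency orders of non-facet linear forms, so the naive spectrum does not cleanly separate facet data from generic data. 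Overcoming this likely requires finer intrinsic invariants -- for example annihilator ideals $\{f:\ell f=0\}$ of degree-one elements, socle dimensions after quotienting by principal linear ideals, or data extracted from the second associated graded -- to reconstruct the facet normals of $Z_A$ and thereby the bridge-free matroid.
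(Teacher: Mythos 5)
The statement you are attempting to prove is labelled as a \emph{conjecture} in the paper (Conjecture~\ref{conj:graphs}, attributed to~\cite{Ne}); the paper contains no proof of it and only proves the external analogues (Theorems~\ref{thm:IsomZ} and~\ref{thm:IsomregZ}), so there is no argument of the author's to compare yours against. Judged on its own, your proposal is a research outline with essential gaps rather than a proof. The one implication you do establish, (non-graded iso) $\Rightarrow$ (graded iso), is correct and standard: $\C^{\Cen}_G$ is a standard graded Artinian local algebra, any algebra isomorphism preserves its maximal ideal (the nilradical), and the induced map on associated graded rings is a graded isomorphism of the algebras themselves. This is the same reduction the paper uses implicitly in the external case.

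The other two implications are not established. For (bridge-free matroid iso) $\Rightarrow$ (graded iso), the Tutte-polynomial observation only shows that a bridge does not change the Hilbert series; it does not produce an algebra isomorphism $\C^{\Cen}_G\cong\C^{\Cen}_{G/e}$, and the square-free presentation you want to invoke is Theorem~\ref{thm:definitions}, which is stated only for the \emph{external} algebra (its central analogue in~\cite{KN1,KN2} is restricted to totally unimodular matrices and is itself a nontrivial construction, not a formality). Worse, the residual claim that ``for bridgeless graphs the central algebra depends only on the graphical matroid'' is essentially one entire direction of the conjecture, so deferring it to ``the spirit of~\cite{Ne}'' is circular as written. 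For the reconstruction direction (graded iso) $\Rightarrow$ (matroid iso), you correctly diagnose why the paper's external-case technique --- recovering the multiset of columns of $A$ up to scaling and permutation from the lengths $\ell\left(\sum_{i} b_i x_i\right)$ of generic linear combinations of degree-one elements inside $\Phi_m$ --- fails centrally: lowering the exponents from $m(H)+1$ to $m(H)$ destroys the separation between facet normals and generic linear forms. But you offer only a list of candidate replacement invariants (annihilators, socle dimensions) without showing that any of them reconstructs the bridge-free matroid. The proposal therefore leaves the statement exactly where the paper leaves it: open.
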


In the paper~\cite{NS}, the $\K$-theoretic filtration was considered, see definition there. Denote by $\K_G^{\Ex}$ the $\K$-theoretic filtration of $\C_G^{\Ex.}$
\begin{theorem}[cf.~\cite{NS}]
Given two graphs $G_1$ and $G_2$ without isolated vertices, the filtered algebras $\K_{G_1}^{\Ex}$ and $\K_{G_2}^{\Ex}$ are isomorphic if and only if $G_1$ and $G_2$ are isomorphic. 
\end{theorem}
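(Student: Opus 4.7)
\medskip

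\noindent\textbf{Proof proposal.} The ``if'' direction is immediate: a graph isomorphism $G_1\cong G_2$ yields a matching of oriented incidence matrices (after a compatible choice of forgotten rows), hence an isomorphism of the subalgebras in the $\Phi_m$-model of Theorem~\ref{thm:definitions}, and this isomorphism visibly respects any filtration defined intrinsically from this construction.

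For the ``only if'' direction, the plan is to bootstrap from the known matroidal rigidity of $\C_G^{\Ex}$ (the preceding theorem) to a full graph rigidity, using the extra data carried by the filtration. First, applying that theorem to the underlying graded algebra of $\K_G^{\Ex}$, any filtered algebra isomorphism $\Phi\colon \K_{G_1}^{\Ex}\to \K_{G_2}^{\Ex}$ already identifies the graphical matroids $M_{G_1}\cong M_{G_2}$, so we may regard $E(G_1)$ and $E(G_2)$ as the same ground set $E$. By Whitney's $2$-isomorphism theorem, $M_{G_1}\cong M_{G_2}$ fails to imply $G_1\cong G_2$ only when the graphs differ by Whitney twists or vertex splits; so it suffices to show that $\Phi$ sees enough of the \emph{vertex structure} to forbid these moves.

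The heart of the argument is to extract the collection of vertex stars $\{E_v : v\in V(G)\}\subset 2^E$ from the filtered algebra in a canonical, isomorphism-invariant way. The natural candidates are built into the $\Phi_m$-realization of Theorem~\ref{thm:definitions}: the algebra generators $X_i=t_i\cdot(\phi_1,\ldots,\phi_m)$ are precisely the signed sums of the $\phi_e$ over $e\in E_{v_i}$. The plan is to use the $\K$-theoretic filtration of~\cite{NS} to characterize, intrinsically inside $\K_G^{\Ex}$, the one-dimensional subspaces spanned by these vertex-star elements, for instance as the degree-one elements whose filtration level (or whose filtration behavior under multiplication) is extremal in a prescribed sense; by the graph-theoretic characterization of the cocycle matroid, these vertex stars are exactly the minimal nonempty cocircuits supported at a single vertex, and this combinatorial description should translate to an algebraic one via the filtration. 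Once the vertex stars are recovered, the graph is reconstructed from $(E,\{E_v\})$ by the standard incidence procedure: identify vertices with stars and edges with their two (or one, for loops) containing stars.

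\emph{Main obstacle.} The only nontrivial step is the intrinsic algebraic characterization of vertex stars in terms of the $\K$-theoretic filtration alone, independent of the presentation. Matroid data sees only the cocircuit \emph{lattice}, and distinguishing a single vertex star from a sum of two disjoint vertex stars (i.e., ruling out Whitney twists) is exactly where one needs the filtration to ``see vertices''. If the filtration from~\cite{NS} refines the degree grading by tracking the edge-supports of elements in the $\Phi_m$-model, this should follow from a minimality argument: the vertex-star generators $X_i$ will sit at a strictly lower filtration level than any nontrivial algebraic combination of them, because multiplication in $\Phi_m$ combines edge-supports additively. The rest of the proof is then a straightforward transfer of structure and a matroid-theoretic verification that $(M_G,\{E_v\})$ determines $G$ up to isomorphism when $G$ has no isolated vertices.
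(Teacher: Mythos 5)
This theorem is imported by the paper from~\cite{NS} with no proof given here, so there is no internal argument to compare against; I can only assess your proposal on its own terms. On those terms it is a plan rather than a proof, and the plan has a genuine gap exactly at the point you yourself flag as the ``main obstacle.'' The entire content of the theorem lies in the fact that the filtration sees strictly more than the algebra: by the preceding theorem, $\C_G^{\Ex}$ as a (graded or ungraded) algebra determines only the graphical matroid, i.e.\ the graph up to Whitney $2$-isomorphism, so everything hinges on extracting vertex data from the $\K$-theoretic filtration. Your argument for this step is conditional --- ``\emph{if} the filtration from~\cite{NS} refines the degree grading by tracking edge-supports \dots this \emph{should} follow from a minimality argument'' --- but you never state what the filtration actually is, and the claimed extremality of the vertex-star generators $X_i$ among degree-one elements is asserted, not verified. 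In fact the $X_i$ are not canonically degree-one elements of an abstract filtered algebra at all (only their span modulo reducible elements is), so ``the vertex-star generators sit at a strictly lower filtration level than any nontrivial combination of them'' needs both a precise formulation and a proof; as written it is circular, since distinguishing a single vertex star from a sum of disjoint vertex stars is precisely what a Whitney twist scrambles and precisely what must be established.

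Two smaller points. First, the hypothesis that $G_1$ and $G_2$ have no isolated vertices is essential (isolated vertices contribute nothing to the construction, so the algebra cannot see them), yet your argument never uses it; any correct proof must invoke it when reconstructing the vertex set from the edge-incidence data. Second, the reduction ``$M_{G_1}\cong M_{G_2}$ plus vertex stars recovers $G$'' is fine in outline, but for disconnected or non-$3$-connected graphs the passage from the cocircuit lattice to the stars is where Whitney's theorem genuinely fails to pin down the graph, so the burden cannot be shifted onto ``a straightforward transfer of structure and a matroid-theoretic verification'': that verification is the theorem. To make this a proof you would need to (i) recall the actual definition of the $\K$-theoretic filtration from~\cite{NS}, (ii) exhibit a specific filtered invariant (e.g.\ dimensions of filtration pieces, or lengths of elements relative to the filtration) that is computed explicitly in terms of vertex degrees and adjacencies, and (iii) show this invariant separates $2$-isomorphic but non-isomorphic graphs.
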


\medskip

The structure of this paper is as follows: in \S~2 we present a classification of external zonotopal algebras and a conjecture for the central case; in \S~3 we prove our classification.

\medskip

\subsection*{Acknowledgments}
This material is based upon work supported by the National Science Foundation under Grant  DMS-1440140 while the author was staying at the Mathematical Sciences Research Institute in Berkeley, California, during the program ``Geometric and Topological Combinatorics'' in the fall~2017. He also would like to thank the participiants of the PA-seminar for their comments.

 \bigskip

\section{Main results}
\begin{definition}
Two linear spaces $V_1 \subset \bR^{m_1}$ and $V_2 \subset \bR^{m_2}$ are called  $z$-equivalent  if $m_1=m_2=m$ and there is an invertible diagonal matrix  $D\in \bR^{m\times m}$ and a permutation $\pi\in S_m$ such that
$$V_1=V_2 (\pi D).$$

The matrices $A_1 \in \bR^{n_1\times m_1}$  of rank $n_1$ and $A_2 \in \bR^{n_2 \times m_2}$ of rank $n_2$  are called $z$-equivalent  if the span of rows of $A_1$ is $z$-equivalent to the span of rows of $A_2$.
\end{definition}
\begin{remark}
It is easy to see that $z$-equivalence is an equivalence relation.

In the case when $A_1$ and $A_2$ do not have proportional columns, we can say that the matrix $A_1$ is equivalent to $A_2$ if and only if their zonotopes are equivalent (since we can reconstruct the ``matrix'' from the zonotope in this case).
\end{remark}
This equivalence is weaker than that of matroids. 
\begin{proposition}
If two matrices ${A_1}$ and ${A_2}$ are $z$-equivalent, then the matroids $M_{A_1}$ and $M_{A_2}$ are isomorphic.  
\end{proposition}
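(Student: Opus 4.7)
The strategy is to reduce the statement to two invariance principles for vector matroids: (i) the matroid $M_A$ depends only on the row span of $A$ as a subspace of $\bR^m$; (ii) permuting columns and rescaling each column by a nonzero scalar changes the matroid only by relabeling the ground set.

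For principle (i), I would recall the standard description: $S\subseteq [m]$ is dependent in $M_A$ iff there is a nonzero linear combination $\sum_{i\in S} c_i y_i=0$, iff there is a nonzero vector of $\ker(A)$ supported on $S$. Since $\ker(A)=V^\perp$ where $V=\mathrm{rowspan}(A)\subseteq \bR^m$, the matroid $M_A$ is completely determined by $V$. In particular, two matrices with the same row span have identical (not merely isomorphic) matroids on the common ground set $[m]$.

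For the combination, suppose $A_1$ and $A_2$ are $z$-equivalent, so that their row spans satisfy $V_1=V_2\pi D$ for some permutation $\pi\in S_m$ and invertible diagonal $D\in\bR^{m\times m}$. Introduce the auxiliary matrix $B:=A_2\pi D\in\bR^{n\times m}$. Its row span equals $V_2\pi D=V_1$, so by (i) we have $M_B=M_{A_1}$ as matroids on $[m]$. On the other hand, the columns of $B$ are obtained from those of $A_2$ by permutation (via $\pi$) together with individual nonzero rescaling (via $D$). Since linear (in)dependence of a set of vectors is unaffected by multiplying each vector by a nonzero scalar, the permutation $\pi$ induces a matroid isomorphism $M_B\cong M_{A_2}$. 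Composing yields $M_{A_1}\cong M_{A_2}$.

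The proof is essentially a definition-unwinding exercise; there is no genuine obstacle. The only point requiring care is tracking the conventions for how $\pi$ and $D$ act on the right of $A_2$ at the level of columns, but this is purely bookkeeping and does not affect the logical structure of the argument.
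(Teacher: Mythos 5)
Your proof is correct. The paper states this proposition without proof (treating it as immediate from the definitions), and your argument --- that $M_A$ is determined by $\ker(A)=\mathrm{rowspan}(A)^{\perp}$, hence by the row span alone, and that column permutation together with nonzero rescaling only relabels the ground set --- is exactly the definition-unwinding the author presumably had in mind. No gaps.
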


\medskip

It is easy to check that $\C_{A_1}^{\Ex}$ and $\C_{A_2}^{\Ex}$ are isomorphic if ${A_1}$ and ${A_2}$ are $z$-equivalent. The converse also holds.
\begin{theorem}
\label{thm:IsomZ}
Let $A_1 \in \bR^{n_1\times m_1}$ and $A_2 \in \bR^{n_2\times m_2}$ be two matrices of rank $n_1$ and $n_2$ respectively.
Then the following are equivalent:
\begin{itemize}
\item $\C_{A_1}^{\Ex}$ and $\C_{A_2}^{\Ex}$ are isomorphic as non-graded algebras;
\item $\C_{A_1}^{\Ex}$ and $\C_{A_2}^{\Ex}$  are isomorphic as graded algebras;
\item ${A_1}$ and ${A_2}$ are $z$-equivalent.
\end{itemize} 
\end{theorem}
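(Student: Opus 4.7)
The implication from $z$-equivalence to graded isomorphism is immediate from Theorem~\ref{thm:definitions}: if $V_{A_1}=V_{A_2}\,\pi D$ in $\bR^m$, then the monomial automorphism $\phi_i\mapsto d_i\phi_{\pi(i)}$ of $\Phi_m$ carries the generators of $\C_{A_2}^{\Ex}$ onto those of $\C_{A_1}^{\Ex}$; and graded isomorphism trivially implies ungraded isomorphism. For the hard direction I first reduce to the graded case: by Theorem~\ref{thm:hilb} each $\C_A^{\Ex}$ is a finite-dimensional graded local $\bR$-algebra, generated in degree $1$, whose maximal ideal $\mathfrak m_A$ is the irrelevant ideal. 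Any ungraded isomorphism $\psi\colon\C_{A_1}^{\Ex}\to\C_{A_2}^{\Ex}$ must send $\mathfrak m_{A_1}$ to $\mathfrak m_{A_2}$ and hence preserves the $\mathfrak m$-adic filtration; the induced map $\mathrm{gr}(\psi)$ on associated graded algebras is a graded isomorphism, and because $\mathfrak m^k/\mathfrak m^{k+1}=R_k$ canonically for any graded local Artinian algebra generated in degree $1$, composing with this identification promotes $\psi$ to a graded isomorphism $\C_{A_1}^{\Ex}\to\C_{A_2}^{\Ex}$. One may moreover assume that neither $A_1$ nor $A_2$ has a zero column, since removing zero columns leaves $\C_A^{\Ex}$ unchanged.

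The crucial invariant is nilpotency index in degree $1$. By Theorem~\ref{thm:definitions}, the degree-$1$ part $V_A$ of $\C_A^{\Ex}$ is identified with the row span of $A$ inside $\bigoplus_i\bR\phi_i\subset\Phi_m$. For $v=\sum_ic_i\phi_i$ a direct expansion gives
\[
v^k \;=\; k!\sum_{\substack{T\subseteq[m]\\|T|=k}}\Bigl(\prod_{i\in T}c_i\Bigr)\prod_{i\in T}\phi_i,
\]
so $v^{|\mathrm{supp}(v)|}\neq 0$ while $v^{|\mathrm{supp}(v)|+1}=0$ in $\Phi_m$, and the same holds inside the subalgebra $\C_A^{\Ex}$. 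The function $v\mapsto|\mathrm{supp}(v)|$ on $V_A$ is therefore intrinsic to the graded algebra $\C_A^{\Ex}$ and is preserved by the induced linear isomorphism $\psi_1\colon V_{A_1}\to V_{A_2}$ (the restriction of $\psi$ to degree~$1$). Comparing its generic value on either side yields $m_1=m_2=:m$, and $n_1=\dim V_{A_1}=\dim V_{A_2}=n_2$.

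It remains to recover the coordinate hyperplanes from the support-size function. Setting $H_i^{A}=\{v\in V_A:v_i=0\}$ and $f(v)=|\mathrm{supp}(v)|$, the algebraic set $\{f<m\}$ equals $\bigcup_i H_i^{A}$; its irreducible components are the distinct hyperplanes occurring in the list $(H_1^A,\ldots,H_m^A)$, and for $v$ generic on such a component $H$ the multiplicity $\#\{i:H_i^A=H\}$ is recovered as $m-f(v)$. Hence $\psi_1$ identifies the multisets $\{H_i^{A_1}\}$ and $\{H_j^{A_2}\}$, producing a permutation $\pi\in S_m$ with $\psi_1(H_i^{A_1})=H_{\pi(i)}^{A_2}$. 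Translated to coordinate functionals this reads $\ell_{\pi(i)}^{A_2}\circ\psi_1=d_i\,\ell_i^{A_1}$ for some $d_i\in\bR^\times$, and with $D=\mathrm{diag}(d_1,\ldots,d_m)$ one obtains that $V_{A_1}$ and $V_{A_2}$ are related by a monomial linear map of $\bR^m$, i.e.\ by a suitable $\pi D$---exactly the $z$-equivalence of $A_1$ and $A_2$.

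The main obstacle in this scheme is the multiplicity-recovery step in the third paragraph: one needs not merely the underlying matroid of $A$ (whose cocircuits are the minimum-support vectors in $V_A$) but the full multiset of coordinate hyperplanes, with multiplicities that correctly count parallel columns. This is precisely what sharpens the classification from matroid isomorphism to $z$-equivalence, and it is the place where the algebraic structure is used through the entire support-size function, not only through its minimum.
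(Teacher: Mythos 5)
Your proof is correct, and its core invariant is the same one the paper uses: the nilpotency index of a degree-one element $v=\sum_i c_i\phi_i$ of $\C_A^{\Ex}\subset\Phi_m$ equals $|\mathrm{supp}(c)|$, and from this single function one recovers the multiset of columns of $A$ up to scaling and permutation, hence the $z$-equivalence class. The two technical steps are, however, executed along genuinely different lines. First, to handle non-graded isomorphisms the paper stays inside the given algebra and chooses nilpotent representatives $x_1,\ldots,x_n$ that minimize the length $\ell$ within their cosets modulo the ``reducible'' elements (which are exactly $\mathfrak{m}^2$), and then checks by hand that $\ell\left(\sum_i b_ix_i\right)$ equals the support size of the degree-one part; you instead pass to the associated graded algebra of the $\mathfrak{m}$-adic filtration and use that $\mathrm{gr}_{\mathfrak{m}}(\C_A^{\Ex})$ is canonically $\C_A^{\Ex}$ because the algebra is standard graded --- a cleaner and more standard reduction that buys you the freedom to work with honest degree-one elements from the start. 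Second, to extract multiplicities the paper builds a finite candidate set of column directions in $\bP^{n-1}$ from the ratios $a_{i,k}/a_{j,k}$, puts a partial order on it, and computes for each candidate the total multiplicity of everything above it as a difference of lengths at generic versus constrained-generic coefficients, finishing by inversion over the poset; you instead identify $\bigcup_i H_i^A$ as the locus where the support function drops below $m$, take irreducible components, and read each multiplicity directly as $m-f(v)$ at a generic point of a component. These carry the same information (a column, as a point of $\bP^{n-1}$, is the normal of the hyperplane $H_i^A$), but your version avoids the poset inversion. One shared caveat: both arguments, and indeed the theorem as stated, tacitly assume $A_1$ and $A_2$ have no zero columns --- a zero column is invisible to $\C_A^{\Ex}$ yet changes $m$ and hence the $z$-equivalence class --- so your ``one may assume'' is really a hypothesis needed for the statement itself rather than a harmless normalization.
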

\begin{corollary}
Let $A_1 \in \bR^{n_1\times m_1}$ and $A_2 \in \bR^{n_2\times m_2}$ be two matrices of ranks $n_1$ and $n_2$ respectively, with isomorphic external algebras $\C_{A_1}^{\Ex}\cong\C_{A_2}^{\Ex}$. Then the matroids $M_{A_1}$ and $M_{A_2}$ are isomorphic. 
\end{corollary}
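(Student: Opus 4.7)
The statement is an immediate consequence of the preceding two results, so the plan is simply to chain them together and record why the composition is valid.

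First I would invoke Theorem~\ref{thm:IsomZ}: the hypothesis $\C_{A_1}^{\Ex}\cong \C_{A_2}^{\Ex}$ (as non-graded algebras, which is the weakest of the three equivalent conditions) implies that $A_1$ and $A_2$ are $z$-equivalent. Then I would apply the Proposition stated just before the theorem, which asserts that $z$-equivalent matrices have isomorphic matroids. Composing these two implications yields $M_{A_1}\cong M_{A_2}$, which is the conclusion.

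There is no genuine obstacle here, since both ingredients are already available. The only thing worth verifying explicitly is that the notion of isomorphism used in the Proposition matches what is claimed in the Corollary, but this is immediate from the definition of $z$-equivalence: a permutation $\pi\in S_m$ together with an invertible diagonal $D$ acting on the column indices sends columns of $A_1$ to nonzero scalar multiples of a reordering of the columns of $A_2$, and such a transformation preserves all linear dependencies among columns, hence induces an isomorphism of the associated matroids. Thus the corollary requires no additional argument beyond citing Theorem~\ref{thm:IsomZ} and the Proposition.
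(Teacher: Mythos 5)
Your proposal is correct and matches the paper's (implicit) argument: the corollary is stated immediately after Theorem~\ref{thm:IsomZ} with no separate proof, precisely because it follows by chaining that theorem with the Proposition that $z$-equivalent matrices have isomorphic matroids. Your added remark justifying the Proposition is fine as well, noting only that $z$-equivalence is defined via equality of row spaces, which determines the column matroid, rather than via a literal column-to-column map.
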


The following theorems shows that unimodular external zonotopal algebras are in one to one correspondence with regular matroids. 
\begin{theorem}
\label{thm:IsomregZ}
Let $A_1 \in \bR^{n_1\times m_1}$ and $A_2 \in \bR^{n_2\times m_2}$ be two unimodular matrices of rank $n_1$ and $n_2$ respectively.
Then the following are equivalent:
\begin{itemize}
\item $\C_{A_1}^{\Ex}$ and $\C_{A_2}^{\Ex}$ are isomorphic as non-graded algebras;
\item $\C_{A_1}^{\Ex}$ and $\C_{A_2}^{\Ex}$ are isomorphic as graded algebras;
\item ${A_1}$ and ${A_2}$ are $z$-equivalent.
\item the matroids $M_{A_1}$ and $M_{A_2}$ are isomorphic.
\end{itemize} 
\end{theorem}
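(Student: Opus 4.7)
The plan is to reduce Theorem~\ref{thm:IsomregZ} to Theorem~\ref{thm:IsomZ}. The first three bulleted conditions are already equivalent by Theorem~\ref{thm:IsomZ}, which holds for arbitrary real matrices of full row rank, so it suffices to establish the additional equivalence between $z$-equivalence and matroid isomorphism under the unimodularity hypothesis.

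One direction, namely that $z$-equivalence implies matroid isomorphism, is immediate from the proposition stated just before Theorem~\ref{thm:IsomZ} and does not use unimodularity: permuting and rescaling the columns by nonzero scalars manifestly preserves the linear dependence relations among the columns, hence preserves the matroid.

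For the converse, assume $A_1$ and $A_2$ are totally unimodular matrices with $M_{A_1}\cong M_{A_2}$. The plan is to invoke the classical uniqueness theorem for unimodular representations of regular matroids (Brylawski--Lucas): any two totally unimodular matrices representing isomorphic matroids can be transformed into one another by a sequence of elementary row operations, column permutations, and multiplications of columns by $\pm 1$. Row operations preserve the row span of a matrix, while the column permutation corresponds to the permutation $\pi$ and the sign changes on columns correspond to a diagonal matrix $D$ with entries in $\{\pm 1\}$ in the definition of $z$-equivalence. Thus the row spans of $A_1$ and $A_2$ satisfy $\mathrm{rowspan}(A_1)=\mathrm{rowspan}(A_2)\cdot (\pi D)$, which is exactly the definition of $z$-equivalence.

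The only non-trivial ingredient is the Brylawski--Lucas uniqueness theorem, and this is precisely where unimodularity is essential: general real representations of a matroid may be inequivalent even up to scaling and permutation (the corollary to Theorem~\ref{thm:IsomZ} shows that matroid isomorphism is genuinely weaker than $z$-equivalence in the non-unimodular setting), but regular matroids are rigid enough for this rigidity result to hold. Once this theorem is cited, the proof is essentially a bookkeeping argument combining it with Theorem~\ref{thm:IsomZ}, and no new calculation on the algebras $\C^{\Ex}_{A_i}$ is needed.
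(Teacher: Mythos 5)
Your proposal is correct, and its skeleton matches the paper's proof exactly: both reduce the first three equivalences to Theorem~\ref{thm:IsomZ} and then establish only the implication from matroid isomorphism to $z$-equivalence using a classical rigidity result for representations of regular matroids. The difference lies in which rigidity result is invoked. The paper argues via oriented matroids: it cites the fact that a regular matroid has a unique orientation up to reorientation (Corollary~7.9.4 of the oriented matroids book), so after multiplying some columns of $A_2$ by $-1$ the two matrices realize the same oriented matroid; it then uses the fact that minimal linear dependencies among columns of a totally unimodular matrix have coefficients $\pm 1$ to conclude that the two matrices have identical dependency data and hence $z$-equivalent row spans. You instead cite the Brylawski--Lucas unique-representability theorem for binary matroids, which directly says that any two representations over a field are projectively equivalent, i.e., related by row operations, a column permutation, and an invertible diagonal scaling --- which is verbatim the definition of $z$-equivalence of row spans. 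Your route is arguably cleaner and slightly more general (it does not actually use total unimodularity of the matrices in the converse direction, only that the matroid is binary and the representations are real), whereas the paper's route stays closer to the combinatorics of totally unimodular matrices. Either citation closes the argument; just make sure to state the Brylawski--Lucas theorem precisely, since the version you need is uniqueness of representations of a \emph{binary} matroid over a fixed field up to projective equivalence, applied to regular matroids over $\bR$.
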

Since for graphs we have a totally unimodular matrix, all graphical matroids are regular; the converse is almost true. Every regular matroid may be constructed by combining graphic matroids, co-graphic matroids, and a certain ten-element matroid $R_{10}$, see~\cite{Seymour} or the book~\cite{Oxley}. In the graphical case the last theorem says that the algebra remembers graph up to $2$-isomorphism, see~\cite{Whitney}.

For the central case, we can extend Conjecture~\ref{conj:graphs} for all matrices. For a matrix $A$, we say that a column is a {\it bridge-column} if after deleting it the rank decreases.

\begin{conjecture}
\label{conj:zonotopal}
Let $A_1 \in \bR^{n_1\times m_1}$ and $A_2 \in \bR^{n_2\times m_2}$ be two matrices of ranks $n_1$ and $n_2$ respectively.
Then the following are equivalent:
\begin{itemize}
\item $\C_{A_1}^{\Cen}$ and $\C_{A_2}^{\Cen}$ are isomorphic as non-graded algebras;
\item $\C_{A_1}^{\Cen}$ and $\C_{A_2}^{\Cen}$ are isomorphic as graded algebras;
\item ${A_1'}$ and ${A_2'}$ are $z$-equivalent, where $A'_i\in \bR^{(n_i-k_i)\times (n_i-k_i)}$ is the submatrix of $A_i$ resulting after deleting all $k_i$ bridge-columns and those $k_i$ rows such that $rk(A_i')=n_i-k_i$.
\end{itemize} 
\end{conjecture}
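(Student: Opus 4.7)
The plan is to adapt the strategy of Theorem~\ref{thm:IsomZ} to the central case, preceded by an algebraic reduction that strips out bridge-columns. The easy direction --- that $z$-equivalence of $A_1'$ and $A_2'$ implies $\C^{\Cen}_{A_1} \cong \C^{\Cen}_{A_2}$ as graded algebras --- should follow from the observation that every bridge-column contributes only linear relations to $\I^{(0)}_{A_i}$, so $\C^{\Cen}_{A_i}$ is identified with $\C^{\Cen}_{A_i'}$ after quotienting by those linear forms, and this identification is functorial in $z$-equivalences of $A_i'$.

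For the reverse direction I would first identify bridges intrinsically from the ring. If $y_i$ is a bridge-column, any facet normal $\eta$ perpendicular to $\operatorname{span}\{y_j : j \neq i\}$ satisfies $m(\eta) = 1$, so $\eta \cdot x$ is a linear generator of $\I^{(0)}_{A_i}$; conversely, every linear element of the ideal arises this way. Hence $\dim (\I^{(0)}_{A_i})_1 = k_i$, and the quotient of $\C^{\Cen}_{A_i}$ by its degree-one relations is canonically $\C^{\Cen}_{A_i'}$. A graded isomorphism automatically respects this quotient. For a non-graded isomorphism one must first show that it preserves the maximal ideal --- standard for local Artinian rings --- and so descends to the bridge-free reductions, at which point one argues at the level of the associated graded to reduce to the graded bridge-free case.

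The second step is reconstruction in the bridge-free case: assuming $A_1, A_2$ are bridge-free and $\C^{\Cen}_{A_1} \cong \C^{\Cen}_{A_2}$ as graded algebras, one needs to recover the multiset $\{(\eta_H, m(H)) : H \in \F(A_i)\}$ from the ring. The candidate characterization is that the facet powers $(\eta_H \cdot x)^{m(H)}$ are precisely the pure powers of linear forms lying in $\I^{(0)}_{A_i}$, minimal in degree and extremal in a suitable ``irreducibility'' sense among such powers. Once this data is extracted up to the ambient $\mathrm{GL}_n$-action on linear forms, the zonotope together with its facet multiplicities is reconstructed, and the remark following the definition of $z$-equivalence recovers the matrix itself up to $z$-equivalence (in the bridge-free case the remaining freedom collapses to permutation and diagonal scaling of columns).

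The main obstacle is the bridge-free reconstruction step. In the external case the shift $k=1$ provides an extra degree of buffer room that allows one to isolate each facet polynomial, which is the core technical device behind Theorem~\ref{thm:IsomZ}; the central ideal is tighter, and a facet power $(\eta_H \cdot x)^{m(H)}$ can often be rewritten as a linear combination of products of lower-degree elements of the ideal, so ``minimal pure powers of linear forms'' are not obviously canonical. Pinning down the facet polynomials will likely require a finer invariant --- for instance the $\K$-theoretic filtration of \cite{NS}, a socle/Poincaré-duality analysis in the spirit of the graphical proof of Conjecture~\ref{conj:graphs}, or an intrinsic Macaulay inverse system description of $\C^{\Cen}_A$ --- and overcoming this step is what keeps the conjecture open outside the graphical setting.
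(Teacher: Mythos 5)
This statement is Conjecture~\ref{conj:zonotopal}: the paper states it as an open problem and offers no proof, so there is no argument of the author's to compare yours against. Judged on its own terms, your proposal is an outline rather than a proof, and you say so yourself. The first stage (stripping bridges) looks sound and fillable: a column $y_i$ is a bridge exactly when the hyperplane spanned by the remaining columns is a facet direction $H$ with $m(H)=1$, distinct bridges give distinct hyperplanes with linearly independent normals, so $\dim(\I^{(0)}_{A})_1$ equals the number of bridges and the quotient by the degree-one part of the ideal is the central algebra of the bridge-free reduction; the passage from non-graded to graded isomorphisms via locality and the $\mathfrak{m}$-adic associated graded is also standard for these algebras, which are graded and generated in degree one. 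The genuine gap is exactly where you place it: the reconstruction of the facet data from the abstract ring in the bridge-free case. Your candidate characterization (``minimal pure powers of linear forms in $\I^{(0)}_A$'') is not canonical, for the reason you give, and no substitute invariant is supplied; since the whole content of the conjecture beyond the bridge reduction lives in this step, the proposal does not constitute a proof.

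One correction to your framing of the external case, which matters for how one might attack the central one. The proof of Theorem~\ref{thm:IsomZ} does not work by isolating the facet generators of the ideal; it works on the other side of the duality, using the square-free model of Theorem~\ref{thm:definitions} to express each generator as $\sum_k a_{i,k}\phi_k$ plus a reducible error term, and then recovering the columns of $A$ (up to $z$-equivalence) from the nilpotency lengths $\ell(\sum b_i x_i)$ of generic linear combinations. The natural central analogue of that strategy would go through the central square-free models announced in~\cite{KN1,KN2}, which the paper says exist only for totally unimodular matrices --- consistent with the fact that even the graphical case (Conjecture~\ref{conj:graphs}, from~\cite{Ne}) is stated as open. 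Your ideal-theoretic route is a legitimately different angle, but it is the part you have not carried out, so the conjecture remains exactly as open after your proposal as before it.
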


\bigskip

\section{Proofs}

Let $B$ be a finite dimension algebra over $\bR$. We say that an element $r=\sum_{i=1}^k a_{2i}a_{2i+1}$ is {\it reducible} if $a_i\in B, i\in [2k]$ are nilpotent elements.

For a nilpotent  element $a\in B$ we define the {\it length} $\ell(a)$ as the maximal $\ell$ such that $a^{\ell}\neq 0$.

\begin{proof}[Proof of Theorem~\ref{thm:IsomZ}] Clearly, we have
$1\Longleftarrow 2 \Longleftarrow 3$, so we will prove $1\Longrightarrow 3$.
Let $\C_A^{\Ex}$ be our algebra. We will work with the square-free definition, i.e., $\C_A^{\Ex}$ is a subalgebra of $\Phi_m$, where 
$$m=max(\ell(a):\ a\in \C_A^{\Ex}).$$
 (note that we can work with $\Phi_m$ only theoretically, i.e., we do not know this embedding). 
We know which element is the unit, so we can chose basis  $x_1,\ldots, x_n$ of nilpotents of $\C_A^{\Ex}$ with the following property:
$$\ell(x+r)\geq \ell(x),$$ for any reducible $r\in\C_A^{\Ex}$ and $x\in\Span\{x_1,\ldots,x_n\}.$ Since we can define the algebra via some matrix $A$, then there is such basis.

\smallskip

Any element has the representation
 $$x_i=\sum_{k=1}^m a_{i,k}\phi_i + r_i,$$
 where $a_{i,k}\in \bR$ and $r_i$ is reducible. Let $A'=\{a_{i,k}:\ (i,k)\in[n]\times[m]\}$ be the corresponding matrix. Our goal is to reconstruct $A'$ up to $z$-equivalece.
 
 Consider the projective space $\bP^{n-1}$ over $\bR$. To finish the proof we should find the multiset $$\mathcal{A}:=\{(a_{1,k},a_{2,k},\ldots ,a_{n,k})\in \bP^{n-1},\ k\in[m]\}.$$

Define the set $S$ of all non-zeroes $s\in \bR$ such that there are $i\neq j\in[n]$ and a non-zero $t\in \bR$ for which
$$\ell(x_i-sx_j)<\ell(x_i-tx_j).$$
It is easy to see that $S$ is exactly the set 
$$\left\{\frac{a_{i,k}}{a_{j,k}}:\ i,j\in [n],\ k\in[m],\ \text{and}\ a_{i,k}, a_{j,k}\neq 0\right\}.$$
Then $S$ is a finite set.
  Define a theoretical set $\mathcal{S}$ of rows of $X$ as  
  $$\mathcal{S}:=\{(s_1,s_2,\ldots ,s_n)\in \bP^{n-1},\ s_i\in S\cup\{0\} \}.$$

Any element of $\mathcal{A}$ is an element of $\mathcal{S}$, so it is enough to find the multiplicity of any $s\in \mathcal{S}$. 

Consider the following partial order on elements of $\bP^{n-1}$:
  $$(p_1,p_2,\ldots ,p_n)\geq (p_1',p_2',\ldots ,p_n')$$
 if there is $t\in \bR$ such that
 $$\forall i\in [n],\ p'_i = \begin{cases} tp_i\\
 0. \end{cases}$$
Note that if, for any $s$, we know the summary multiplication of all $s'\geq s$ in $\mathcal{A}$, then we can calculate multiplicity of all elements.

Given $s\in \mathcal{S},$ then the summary multiplication of all $s'\geq s$ is equal to $$\ell\left(\sum_{i\in I} b_i x_i\right)-\ell\left(\sum_{i\in I} c_i x_i\right),$$
where \begin{itemize}
 \item $I\subseteq [n]$ is the support of $s$;
 \item $b_i,\ i\in I$ are generic;
 \item $c_i,\ i\in I$ are generic with the linear condition $\sum_{i\in I}{c_i}{s_i}=0$.
  \end{itemize}
  Let us check it:
 \begin{multline*}\ell\left(\sum_{i\in I} b_i x_i\right)=
  \ell\left(\sum_{i\in I} b_i (\sum_{k=1}^m a_{i,k}\phi_k+r_k)\right)=\ell\left(\sum_{i\in I} b_i (\sum_{k=1}^m a_{i,k}\phi_k)\right)=\\
 = \ell\left(\sum_{k=1}^m (\sum_{i\in I} b_ia_{i,k})\phi_k\right)=\#\{i\in k:\  \sum_{i\in I} b_ia_{i,k}\neq0\},
   \end{multline*}
   Similarly we have
   $$\ell\left(\sum_{i\in I} c_i x_i\right)=\#\{i\in k:\  \sum_{i\in I} c_ia_{i,k}\neq0\}.$$
   
Since $b_i,\ i\in I$ and $c_i,\ i\in I$ are generic with one condition $\sum_{i\in I}{c_i}{s_i}=0$, we have the following property:
 if $\sum_{i\in I} b_ia_{i,k}\neq 0$ then 
$\sum_{i\in I} c_ia_{i,k}=0$ if and only if $(a_{1,k},\ldots,a_{n,k})\geq s$.

Hence, we can compute the multiplicity of any $s$.    \end{proof}

\begin{proof}[Proof of Theorem~\ref{thm:IsomregZ}]
We know $1\Longleftrightarrow 2 \Longleftrightarrow 3 \Longrightarrow 4$ by Theorem~\ref{thm:IsomZ}, where we  also reconstructed a matrix, so we know the matroid.

\smallskip

 $3 \Longleftarrow 4$. Let $A_1$ and $A_2$ be two totally unimodular matrices which give the same regular matroid (we assume that the order of elements are the same). 
  
Also if $M$ is a regular matroid, then all orientations of $M$ differ only by reorientations (see Corollary~7.9.4 ~\cite{Oriented}). Hence, we can multiply some columns of $A_2$ by $-1$ and get $A_2'$ such that $A_1$ and $A_2'$ have the same oriented matroid.
 
 It is well-known that if we have a totally unimodular matrix $A_i$, then all minimal linear dependents of its columns have coefficients $\pm 1$. We get that matrices $A_1$ and $A_2'$ have linear dependents with the same coefficients and, hence, $A_1\sim_z A_2'\sim_z A_2$.
\end{proof}


\begin{thebibliography}{8}

\bibitem{AS} A.\,A.\,Akopyan, A.\,A.\,Saakyan, 
{\it A system of differential equations that is related to the polynomial class of translates of a box spline,}  Mat. Zametki 44:6 (1988),  pp. 705--724

\bibitem{AP} F.\,Ardila, A.\,Postnikov, 
{\it Combinatorics and geometry of power ideals,} Trans. of the AMS 362:8 (2010), pp 4357--4384.


\bibitem{Ar} V.\,I.\,Arnold,
{\it Remarks on eigenvalues and eigenvectors of {H}ermitian matrices, {B}erry phase, adiabatic connections and quantum {H}all effect}, Selecta Mathematica 1:1 (1995), 1--19
	
	
\bibitem{Berget}
A.\,Berget, {\it Products of linear forms and {T}utte polynomials,}
{{European~J. Combin.}} {31} (2010), pp. 1924--1935.

\bibitem{BergetIn}
A.\,Berget, {\it Internal Zonotopal Algebras and the Monomial Reflection Groups,} \url{https://arxiv.org/abs/1611.06446}

\bibitem{Be} O.\,Bernardi, {\it Tutte polynomial, subgraphs, orientations and sandpile model: new connections via embeddings,} Electronic J. Combinatorics 15:1 (2008).

\bibitem{Oriented} A.\,Björner, M.\,Las Vergnas, B.\,Sturmfels, N.\,White, G.\,M.\,Ziegler, {\it Oriented matroids,} volume 46 of Encyclopedia of Mathematics and its Applications (1999).







\bibitem{DM} W.\,Dahmen, A.\,Micchelli, {\it On the local linear independence of translates of a box spline,}  Studia Math. 82:3 (1985), pp.243--263

\bibitem{BR} C.\,De Boor, A.\,Ron, {\it On polynomial ideals of finite codimension with applications to box spline theory}, J. Math. Anal. Appl. 158:1 (1991), pp. 168--193

\bibitem{CP} C.\,De Concini, C.\,Procesi, {\it Topics in hyperplane arrangements,} polytopes and
box-splines, Universitext, Springer, New York, 2011.

\bibitem{DR} N.\,Dyn, A.\,Ron, {\it Local approximation by certain spaces of exponential polynomials,
approximation order of exponential box splines, and related interpolation problems,} Trans.
Amer. Math. Soc. 319 (1990), no. 1, 381–403.


 
\bibitem{HR} O.\,Holtz, A.\,Ron, {\it Zonotopal algebra,} Advances in Mathematics 227 (2011), pp 847--894.

\bibitem{HRX} O.\,Holtz, A.\,Ron, Z.\,Xu, {\it Hierarchical zonotopal,} spaces.Transactions of the American Mathematical Society 364.2 (2012): 745-766.

\bibitem{Huang} B.\,Huang, {\it Monomization of Power Ideals and Generalized Parking
Functions,} \url{https://math.mit.edu/research/highschool/primes/materials/2014/Huang.pdf}

\bibitem{KN1} A.\,N.\,Kirillov, G.\,Nenashev, {\it On $Q$-deformations of Postnikov-Shapiro algebras,} Séminaire Lotharingien de Combinatoire, 78B.55, FPSAC (2017) 12 pp.

\bibitem{KN2} A.\,N.\,Kirillov, G.~Nenashev, {\it Unimodular zonotopal algebra,} in preparation

\bibitem{KW} D.\,J.\,Kleitman, K.\,J.\,Winston, {\it Forests and score vectors,} Combinatorica 1:1 (1981), pp 49--54.

\bibitem{LenzMat} M.\,Lenz,  {\it Zonotopal algebra and forward exchange matroids.} Advances in Mathematics 294 (2016), 819--852. 



\bibitem{LenzBS} M.\,Lenz,  {\it Lattice Points in Polytopes, Box Splines, and Todd Operators.} International Mathematics Research Notices 2015:14 (2016),  5289--5310. 

\bibitem{LenzH} M.\,Lenz, {\it Hierarchical Zonotopal Power Ideals,} 
European Journal of Combinatorics 33 (2012), pp. 1120-1141.

\bibitem{NR} L.\,Nan, A.\,Ron , {\it External zonotopal algebra,} Journal of Algebra and its Applications 13.02 (2014):1350097.

\bibitem{NS} G.\,Nenashev, B.\,Shapiro, {\it``K-theoretic'' analog of Postnikov-Shapiro algebra distinguishes graphs,} Journal of Combinatorial Theory, Series A, 148 (2017), pp. 316--332.

\bibitem{Ne} G.\,Nenashev, {\it Postnikov-Shapiro algebras, graphical matroids and their generalizations,}   \url{https://arxiv.org/abs/1509.08736}


\bibitem{Oxley} J.\,G.\,Oxley, {\it  Matroid theory,} Oxford University Press, USA, 2006. 

\bibitem{Seymour} P.\,D.\,Seymour, {\it Decomposition of regular matroids,} Journal of combinatorial theory, Series B, 28:3 (1980), pp.305--359.

\bibitem {PS} A.\,Postnikov, B.\,Shapiro, {\it Trees, parking functions, syzygies, and deformations of monomial ideals,} Trans. Amer. Math. Soc. 356:8 (2004),  pp 3109--3142.

\bibitem {PSS} A.\,Postnikov,  B.\,Shapiro, M.\,Shapiro, {\it Algebras of curvature forms on homogeneous manifolds,} Differential topology, infinite-dimensional Lie algebras, and applications, Amer. Math. Soc. Transl. Ser. 2, 194 (1999), pp 227--235.

\bibitem {SS} B.\,Shapiro, M.\,Shapiro,  {\it On the ring generated by Chern $2$-forms on ${\rm SL}_n/B$,} C. R. Acad. Sci. Paris. I Math. 326:1 (1998),  pp 75--80.

\bibitem{Shan} J.\,J.\,Shan, {\it A special case of Postnikov-Shapiro conjecture} 
\url{https://arxiv.org/abs/1307.5895}

\bibitem{SX} B.\,Sturmfels, Z.\,Xu, {\it Sagbi bases of Cox-Nagata rings,} J. Eur. Math. Soc. 12 (2010), no. 2, 429–459

\bibitem{Whitney} H.\,Whitney, {\it 2-isomorphic graphs,} American Journal of Mathematics 55 (1933),  pp. 245--254.


\end{thebibliography}
\end{document}